\theoremstyle{plain}
\newtheorem{theorem}{Theorem}[section]
\newtheorem{corollary}[theorem]{Corollary}
\newtheorem{definition}[theorem]{Definition}
\newtheorem{lemma}[theorem]{Lemma}
\newtheorem{remark}[theorem]{Remark}
\numberwithin{equation}{section}
\newcommand{\re}{{\mathbb R}}
\newcommand{\tr}{\mathop{\mathrm{tr}}}
\newcommand{\diag}{\mathop{\mathrm{diag}}}
\newcommand{\Real}{\mathop{\mathrm{Re}}}
\begin{document}

\date{4 June 2012}

\title[Affine property and jump behavior]{Affine processes on positive semidefinite $d\times d$ matrices have jumps of finite variation in dimension $d>1$}
\author[E.Mayerhofer]{Eberhard Mayerhofer}
\address{Deutsche Bundesbank, Wilhelm-Epstein-Stra{\ss}e 14, 60431 Frankfurt am Main, Germany}
\email{ eberhard.mayerhofer@gmail.com}
\thanks{E.M. is Marie--Curie Fellow at Deutsche Bundesbank. This research has obtained funding from WWTF (Vienna Science and Technology Fund) and from the European Community (FP7- Initial Training Network under grant agreement number PITN-GA-2009-237984). The funding is gratefully acknowledged.}

\begin{abstract}
The theory of affine processes on the space of positive semidefinite $d\times d$ matrices has
been established in a joint work with Cuchiero, Filipovi\'c and
Teichmann (2011). We confirm the conjecture stated therein that in dimension $d>1$
this process class does not exhibit jumps of infinite total
variation. This constitutes a geometric phenomenon which is in contrast to the
situation on the positive real line (Kawazu and Watanabe, 1974).
As an application we prove that the exponentially affine property of
the Laplace transform carries over to the Fourier-Laplace transform if the
diffusion coefficient is zero or invertible.
\end{abstract}

\keywords{affine processes, positive
semidefinite processes, jumps, Wishart processes. \textit{MSC 2000:}
Primary: 60J25; Secondary: 91B70}

\maketitle 

\section{Introduction}
Affine processes are a special class of stochastically continuous
Markov processes with the following feature: Some suitable integral
transform (such as the characteristic function \cite{dfs}, Laplace
transform (\cite{KW, cfmt}), Fourier-Laplace transform, or even
moment generating function \cite{ADPTA}) of their transition
function is exponentially affine in the state variable. It has
become custom  to describe affine processes in terms of a
parametrization of their infinitesimal generator -- quite similarly
 to the L\'evy class \cite{Sato1999}, where the so-called L\'evy-Khintchine triplet
 $(a,c, m(d\xi))$\footnote{For simplicity of notation only the one dimensional case is recalled here.} relative to a truncation function
 $\chi(\xi)$ allows a parametric description of the generator
\[
\mathcal A f(x)= a f''(x)+b f'(x)+\int\limits_{\mathbb R
\setminus\{0\}} (f(x+\xi)-f(x)-f'(x)\chi(\xi))m(d\xi).
\]
The affine property translates into affine drift, diffusive and jump
behavior, and the coefficients of the involved affine functions
determine the so-called ``admissible parameter set''(\cite{dfs}).
For instance, for the state space $\mathbb R_+:=[0,\infty)$,  Kawazu
and Watanabe \cite{KW} show that the infinitesimal generator of a
conservative affine processes $X$ takes the form \footnote{ Note
that in the case $\alpha=\beta=0,\mu=0$, $X$ is a L\'evy
subordinator.}
\[
\mathcal A f(x)= \alpha x f''(x)+ (b+\beta x)
f'(x)+\int\limits_{\mathbb R_+\setminus\{0\}}
(f(x+\xi)-f(x)-f'(x)\chi(\xi))(m(d\xi)+x\mu(d\xi)), \] with
``parameters'' $(\alpha\geq 0,b\geq 0,\beta\in\mathbb R,
m(d\xi),\mu(d\xi))$, where the last two objects are sigma-finite
measures on $\mathbb R_+\setminus\{0\}$ such that
\[
\int\limits_{\mathbb R_+\setminus\{0\}}(\|\xi\|\wedge 1)
m(d\xi)<\infty,\quad \int_{\mathbb
R_+\setminus\{0\}}(\|\xi\|^2\wedge 1) \mu(d\xi)<\infty.
\]
However, a full parametric characterization depends crucially on the
geometry of the state space, and the probabilistic properties of
affine processes may vary accordingly. Motivated by multivariate
extensions in the affine term structure literature as well as in
stochastic volatility, Duffie, Filipovi\'c and Schachermayer
\cite{dfs} establish a unified theory on the so-called canonical
state spaces $\mathbb R_+^m\times\mathbb R^n$ (for further insights,
and certain simplifications, see \cite{keller, kst}). The recent
theory of Cuchiero, M., Filipovi\'c and Teichmann \cite{cfmt} for
affine processes on positive semidefinite matrices $S_d^+$ is a
response to suggestions in the finance literature concerning affine
multi-asset models based on matrix factors. Those, in turn, have
mostly used the class of Wishart processes as put forward in
\cite{bru}, or the OU-type processes driven by matrix variate L\'evy
subordinators \cite{barndorffstelzer}. For a review on financial
modeling issues with matrix factors, see the extensive introduction
of \cite{cfmt}, as well as the references given therein.

Aim of this paper is to show that affine processes on $S_d^+$,
$d\geq 2$, do not exhibit jumps of infinite total variation (Theorem
\ref{th: maintheorem}). This important result confirms a conjecture
formulated in \cite[Section 2.1.4]{cfmt}. In the conservative case,
it allows to simplify the semimartingale decomposition of
\cite[Theorem 2.6]{cfmt}; this is subject of Theorem
\ref{semimartingale}. A crucial application of Theorem \ref{th:
maintheorem} concerns the affine character of the Fourier-Laplace
transform of affine processes (Theorem \ref{th41}). In particular,
we show that in the presence of non-degenerate diffusion components,
affine processes are affine in the sense of Duffie, Filipovi\'c and
Schachermayer (Theorem \ref{th42} and Corollary \ref{cor42}). This means, that
their characteristic function is exponentially affine in the state-variable.
A detailed introduction to this topic with technical remarks is given
in Section \ref{sec: FLT}.

The main result of this paper, Theorem \ref{th41}, reveals a geometric phenomenon;
Indeed, in the much simpler case $d=1$, where the state space
simplifies to the positive real line $\mathbb R_+$, stochastic
processes with jumps of infinite total variation actually exist. For
instance, let $d\xi$ denote the Lebesgue measure on $[0,1]$ and
define a linear jump characteristic as
\[
\mu(d\xi):=\xi^{-2}1_{(0,1]}(\xi)d\xi
\]
Clearly $1=\int_0^1 \xi^2 \mu(d\xi)<\infty$, hence due to \cite{KW},
an affine pure jump process $X$ with infinitesimal generator
\[
\mathcal Af(x):=x\int\limits_{\mathbb R_+ \setminus\{0\}}
\left(f(x+\xi)-f(x)-f'(x)\xi\right) \mu(d\xi)
\]
exists. Let us start $X$ at $x>0$ and denote by $X_t$ its c\`adl\`ag
representation\footnote{Such exists due to the Feller property of
$X$, see \cite{dfs}.}. Then $X_t$ is a special semimartingale with
characteristics $(A=0, B=0, \nu(dt,d\xi))$, where the compensator of
$X_t$ equals
\[
\nu(dt, d\xi)=X_t \mu(d\xi)dt.
\]
The canonical decomposition of $X_t$ is given in terms of the
Poisson random measure associated with its jumps, $\mu^X(dt,d\xi)$:
\[
X_t=x+\xi\star (\mu^X-\nu)=\lim_{\varepsilon\downarrow
0}\int\limits_{\xi>\varepsilon}\int\limits_0^t \xi
(\mu^X(ds,d\xi)-\nu(ds,d\xi))
\]
and clearly $X_t>0$ a.s., because the jumps of $X$ are positive
throughout. Hence, almost surely it holds that\footnote{Of course in the
finite case, the two summands would differ in general.}:
\[
\sum_{s\leq t} \vert\Delta X_s\vert=\sum_{s\leq t} \Delta X_s=\int
\limits_{\mathbb R_+\setminus\{0\}} \xi\,\mu(d\xi)\times
\left(\int_0^t X_s ds\right)=\infty
\]
For $d\geq 2$ the complex geometry of the boundary $\partial S_d^+$
of $S_d^+$ -- it is not anymore the origin only, nor it is a smooth
manifold -- leads to non-trivial restrictions concerning the linear
jump behavior. One of these is \eqref{eq: mu} below, which
expresses that transversal to $\partial S_d^+$ only finite variation
jumps are allowed. In addition, there is a non-trivial tradeoff
between linear drift and linear jumps, see eq.~\eqref{eq:
inwarddrift}. One of the nice consequences of Theorem \ref{th:
maintheorem} is that these two conditions may be disentangled from
each other, into a simple condition that the drift must be inward
pointing at the boundary (eq.~\eqref{eq: betaij}) and the
compensator of the affine processes satisfies a stronger
integrability condition (see \eqref{eq: mu}). Furthermore, the
admissible parameter set is now formulated independent to truncation
functions, which is impossible in the setting of canonical state
spaces \cite{dfs}, and in particular for $d=1$.

It should perhaps be noted that the original characterization of
affine processes \cite[Theorem 2.4]{cfmt} and all consequences
thereof are stated in a way, which nest the one-dimensional one
(cf.~ \cite{KW} and \cite{dfs}) -- this is possible in view of the
implicit nature of condition \eqref{eq: inwarddrift}. As such the
preceding theory is perfectly valid in its original formulation; the
contribution of the present work, however, is a technical
simplification of the theory of affine processes on the cone of
positive semidefinite matrices $S_d^+$ of arbitrary dimension $d\geq
2$, and the additional theoretic results concerning the
Fourier-Laplace transform of this process class.

\section{Notation and definition of the affine property}
We try to keep notation and presentation of this paper as simple as
possible. As reference, both for applied and theoretic issues, see
the quite extensive work \cite{cfmt}; this also concerns technically
involved facts, which are here only recollected in prose.

$1_{A}$ equals the indicator
function corresponding to some set $A$. $S_d$ denotes the linear space of real $d\times d$
symmetric matrices, and $\langle x,y\rangle:=\tr(xy)$ is the standard scalar
product thereon, given by the trace of the matrix product. Accordingly,
$\|\,\|$ is the induced norm on $S_d$, and the pierced unit ball equals
\[
B^\circ_1:=\{z\in S_d^+\mid 0<\|z\|\leq 1\}.
\]
 The natural order introduced by the closed convex cone $S_d^+$ is denoted by
$\preceq$. The cone of positive definite matrices is denoted by $S_d^{++}$
(and clearly is the interior of $S_d^+$). $\nabla f(x)$ denotes the Frechet derivative of a
function $f$ at $x\in S_d$. This coordinate free notation allows much a much
shorter and more elegant presentation; for an account of the involved details and the
coordinate wise way to write what follows, the reader is referred to the nice
exhibition \cite{alfonsi} as well as \cite{cfmt}. Only in the proof
of the main theorem \ref{th: maintheorem} coordinates are used.

We consider a time-homogeneous Markov process $X$ with state space
$S_d^+$ and semigroup $(P_t)_{t \geq 0}$ acting on bounded Borel
measurable functions $f$
\begin{align*}
P_tf(x)=\int\limits_{S_d^+} f(\xi)p_t(x,d\xi), \quad x \in S_d^+.
\end{align*}
Here $p_t(x,d\xi)$ denotes the (possibly sub--)Markovian transition function of $X$.

\begin{definition}\label{def: affine process Sd+}
The Markov process $X$ is called \emph{affine} if
\begin{enumerate}
  \item it is stochastically continuous, that is, $\lim_{s\to t}
p_s(x,\cdot)=p_t(x,\cdot)$ weakly on $S_d^+$ for every $t$ and $x\in
S_d^+$, and
\item\label{def: affine process Sd+2} its Laplace transform has exponential-affine
dependence on the initial state:
\begin{align}\label{def: affine process}
P_te^{-\langle u, x\rangle}=\int\limits_{S_d^+}e^{-\langle u, \xi
\rangle}p_t(x,d\xi)=e^{-\phi(t,u)-\langle \psi(t,u),x\rangle},
\end{align}
for all $t$ and $u,x\in S_d^+$, for some functions $\phi: \re_+
\times S_d^+ \rightarrow \re_+$ and $\psi: \re_+ \times S_d^+
\rightarrow S_d^+$.
\end{enumerate}
\end{definition}
\section{Main Result and Proof}
The so-called {\it
admissible parameter set} is introduced in the following. Note that unlike \cite[Definition
2.3]{cfmt} truncation functions may be omitted, and the complicated
admissibility condition (\cite[(2.11)]{cfmt}, see also \eqref{eq:
inwarddrift} in the proof below) is dropped:

\begin{definition}\label{def: necessary admissibility}
Let $d\geq 2$. An \emph{admissible parameter set} $(\alpha, b,B, c, \gamma,
m(d\xi), \mu(d\xi))$ consists of
\begin{itemize}
  \item a linear diffusion coefficient $\alpha\in S_d^+$,
  \item a constant drift term $b\in S_d^+$ which satisfies
  \begin{equation}\label{eq: alpha}
    b \succeq (d-1)\alpha,
  \end{equation}
  \item a constant killing rate term $c \in \re_+$,
  \item a linear killing rate coefficient $\gamma \in S_d^+$,
\item a constant jump term: a Borel measure $m$ on $S_d^+\setminus\{0\}$ satisfying
\begin{equation}\label{eq: m}
   \int\limits_{S_d^+\setminus\{0\}}  \left(\|\xi\|\wedge 1\right)
    m(d\xi) <
\infty,
  \end{equation}
 \item
a linear jump coefficient $\mu$ which is an $S_d^+$-valued\footnote{We deviate here
a little from the corresponding definition in \cite{cfmt}, where $\mu$ is a finite measure on $S_d^+\setminus\{0\}$ (later divided by $\|\xi\|^2\wedge 1$). Here ''$S_d^+$--valued`` has to be understood as follows: For any Borel set $E$ in $S_d^+$ such that its $S_d^+$--topological closure $\bar E\subset S_d^+\setminus\{0\}$ we have $\mu(E)\in S_d^+$. This allows infinite activity jumps with state-dependent intensity: Indeed, there exist $\mu$ for which $\mu(S_d^+\setminus\{0\})=\infty$, which nevertheless satisfy eq.~\eqref{eq: mu}. The latter simply means that $(\|\xi\|\wedge 1) \mu(d\xi)$ is a finite measure.}, sigma
finite measure on $S_d^+\setminus\{0\}$ and \footnote{The
integral is of course matrix valued, and $<\infty$ expresses that
    it is finite.}
\begin{equation}\label{eq: mu}
   \int\limits_{S_d^+\setminus\{0\}}  \left(\|\xi\|\wedge 1\right)
    \mu(d\xi) <\infty.
  \end{equation}
\item and, finally, a linear drift $B$, which is a linear map from $S_d$ to $S_d$ and ``inward pointing'' at the boundary.
That is,
\begin{equation}\label{eq: betaij}
\langle B(x),u\rangle \geq 0\quad\text{for all $x,u\in S_d^+$ with $\langle x,
u\rangle=0$.}
\end{equation}
\end{itemize}
\end{definition}

The main statement of this paper follows:
\begin{theorem}\label{th: maintheorem}
Let $X$ be an affine processes on $S_d^+$ ($d\geq 2$). Then its
Markovian semigroup $(P_t)_t$ has an infinitesimal generator
$\mathcal A$  acting on the
space of rapidly decreasing functions\footnote{For further details, see \cite[Appendix B]{cfmt}.} supported on $S_d^+$
\begin{align*}
\mathcal Af(x)&=2\langle\nabla \alpha\nabla f(x),x\rangle+ \langle
\nabla f(x), b+B(x)\rangle- (c+\langle \gamma,x\rangle)f(x)\\&+
\int\limits_{S_d^+ \setminus\{0\}}
\left(f(x+\xi)-f(x)\right)(m(d\xi)+\langle \mu(d\xi), x\rangle).
\end{align*}
where $(\alpha,b,B,c,\gamma,m,\mu)$ are an admissible parameter set in the sense of Definition \ref{def: necessary admissibility}.
\end{theorem}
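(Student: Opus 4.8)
The plan is to start from the established characterization \cite[Theorem 2.4]{cfmt}, which already delivers the generator in essentially the stated shape but with a truncation term $\langle\nabla f(x),\chi(\xi)\rangle$ inside the jump integral, the weaker L\'evy-type integrability of $\mu$, and the coupled admissibility inequality \eqref{eq: inwarddrift} (i.e.\ \cite[(2.11)]{cfmt}). The entire content of the theorem is the passage from that formulation to the present, truncation-free one, and this reduces to a single analytic statement: the finite-variation estimate \eqref{eq: mu}. Once \eqref{eq: mu} is available the truncation becomes superfluous and can be absorbed into the drift, and \eqref{eq: inwarddrift} collapses to \eqref{eq: betaij}; so I would isolate \eqref{eq: mu} as the heart of the proof.

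To prove \eqref{eq: mu} I would first reduce it to a statement about the diagonal entries of $\mu$. On the cone one has $\tfrac{1}{\sqrt d}\tr\xi\le\|\xi\|\le\tr\xi$ for $\xi\in S_d^+$, and every entry of the $S_d^+$-valued measure is dominated by its trace, $|\mu_{ij}|\le\tr\mu=\sum_k\mu_{kk}$ as measures; moreover the large-jump part is harmless since $\mu$ has finite intensity on $\{\|\xi\|>1\}$. Hence \eqref{eq: mu} is equivalent to the scalar bounds $\int_{B_1^\circ}\xi_{ii}\,\mu_{kk}(d\xi)<\infty$ for all $i,k$. From \eqref{eq: inwarddrift}, where near the origin the truncation equals the identity $\chi(\xi)=\xi$, evaluated at admissible pairs $x,u\in S_d^+$ with $\langle x,u\rangle=0$, one extracts the key finiteness $(\star)$: $\int_{B_1^\circ}\langle\xi,u\rangle\,\langle\mu(d\xi),x\rangle<\infty$, the integrand being nonnegative because $\xi,u,x\succeq0$ and $\mu(d\xi)\succeq0$. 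Taking the coordinate pair $x=e_je_j^\top$, $u=e_ke_k^\top$ with $j\ne k$ (legitimate since $\langle e_je_j^\top,e_ke_k^\top\rangle=0$) settles all off-diagonal cases $i=j\ne k$ at once.

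The main obstacle is the diagonal term $\int_{B_1^\circ}\xi_{kk}\mu_{kk}(d\xi)$, which $(\star)$ cannot reach through coordinate vectors: $\langle x,u\rangle=0$ forces $\mathrm{range}(x)\perp\mathrm{range}(u)$, so the $\mu$-direction and the $\xi$-direction appear to decouple. This is exactly where $d\ge2$ enters and where $d=1$ genuinely fails. For $d\ge2$ there exist orthogonal $v\perp w$ in a plane $\mathrm{span}(e_k,e_l)$, $l\ne k$, with $v_k,w_k\ne0$; choosing the rank-one pair $x=ww^\top$, $u=vv^\top$ (so $\langle x,u\rangle=(v^\top w)^2=0$) the product $(v^\top\xi v)(w^\top\mu(d\xi)w)$ already carries $\xi_{kk}\mu_{kk}$ with the positive weight $v_k^2w_k^2$. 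Running $v,w$ through finitely many rotation angles in that plane and forming a positive (quadrature) combination of the corresponding instances of $(\star)$, I would reconstruct the explicit nonnegative quadratic form $Q=\tfrac18(\xi_{kk}\mu_{kk}+\xi_{ll}\mu_{ll})+\tfrac38(\xi_{kk}\mu_{ll}+\xi_{ll}\mu_{kk})+\tfrac12\xi_{kl}\mu_{kl}$, whose integral over $B_1^\circ$ is then finite as a positive combination of finite quantities. The two cross terms are finite by the previous step, the sole indefinite term is absolutely integrable by the principal $2\times2$ minor inequality $|\xi_{kl}\mu_{kl}|\le\sqrt{\xi_{kk}\xi_{ll}\mu_{kk}\mu_{ll}}\le\tfrac12(\xi_{kk}\mu_{ll}+\xi_{ll}\mu_{kk})$, and therefore $\int_{B_1^\circ}\xi_{kk}\mu_{kk}(d\xi)<\infty$ for every $k$, which together with the off-diagonal bounds gives \eqref{eq: mu}. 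I expect this rotation/quadrature step to be the delicate part; it encodes the announced geometric dichotomy between $d=1$ and $d\ge2$.

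Finally I would assemble the statement. With \eqref{eq: m} and \eqref{eq: mu} in force, $(\|\xi\|\wedge1)(m(d\xi)+\langle\mu(d\xi),x\rangle)$ is a finite measure, so the truncation may be dropped and the linear-in-$x$ quantity $\int\chi(\xi)\langle\mu(d\xi),x\rangle$ together with the constant $\int\chi(\xi)m(d\xi)$ absorbed into $B$ and $b$ respectively, both integrals converging by \eqref{eq: mu} and \eqref{eq: m}. This substitution turns the coupled inequality \eqref{eq: inwarddrift} into the plain inward-pointing condition \eqref{eq: betaij} for the redefined $B$, while the remaining constraints---$\alpha,b,\gamma\in S_d^+$, $c\ge0$, and the diffusion bound \eqref{eq: alpha}, $b\succeq(d-1)\alpha$---are inherited unchanged from \cite[Theorem 2.4]{cfmt}.
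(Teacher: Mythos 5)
Your proposal is correct and follows essentially the same route as the paper's own proof: the paper's boundary pairs $e_{\pm}^{kl}=(e_k\pm e_l)(e_k\pm e_l)^\top$ are precisely your rotated rank-one pairs at $\theta=\pm\pi/4$, and every remaining step (off-diagonal terms from coordinate pairs, control of the indefinite term $\xi_{kl}\mu_{kl}$, the trace-versus-norm comparison, absorption of the truncation into the drift so that \eqref{eq: inwarddrift} collapses to \eqref{eq: betaij}) has an exact counterpart there, the only difference in execution being that the paper avoids possible $\infty-\infty$ cancellations by truncating $\mu$ to the finite measures $\mu^{\varepsilon}=\mu\,1_{\varepsilon<\|\xi\|\le 1}$ and deriving bounds uniform in $\varepsilon$, whereas you invoke the a.e.\ Cauchy--Schwarz bound $|\xi_{kl}\mu_{kl}|\le\tfrac12(\xi_{kk}\mu_{ll}+\xi_{ll}\mu_{kk})$. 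One harmless slip: in any positive combination of the products $(v^\top\xi v)\,(w^\top\mu(d\xi)w)$ the coefficient of $\xi_{kl}\mu_{kl}$ is forced to be $-4$ times that of $\xi_{kk}\mu_{kk}$ (the full-circle average gives $-\tfrac12\xi_{kl}\mu_{kl}$, not $+\tfrac12$), but since your argument only uses that this term is absolutely integrable, the sign does not affect the conclusion.
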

\begin{proof}
Let $\chi(\xi): S_d^+\rightarrow S_d^+$ be a truncation function,
that is $\chi(\xi)=\xi$ near zero, and $\chi$ is continuous, and
(what may be assumed without loss of generality) bounded by $1$. By
\cite[Theorem 2.4 and Definition 2.3]{cfmt} the semigroup $(P_t)_t$
has an infinitesimal generator $\mathcal A$ acting as
\newline
\begin{align*}
\mathcal Af(x)&=2\langle\nabla \alpha\nabla f(x),x\rangle+ \langle
\nabla f(x), b+\widetilde B(x)\rangle\\\quad&- (c+\langle
\gamma,x\rangle)f(x)+ \int\limits_{S_d^+ \setminus\{0\}}
\left(f(x+\xi)-f(x)\right) m(d\xi)\\\quad&+ \int\limits_{S_d^+
\setminus\{0\}} \left(f(x+\xi)-f(x)-\langle \chi(\xi),\nabla
f(x)\rangle\right)\langle \mu(d\xi), x\rangle.
\end{align*}
where $c\geq 0, \gamma\in S_d^+$ and the parameters $\alpha,b\in
S_d^+$ satisfy \eqref{eq: alpha} and $\mu(d\xi)$ is a sigma finite
$S_d^+$ valued measure on $S_d^+\setminus\{0\}$ which integrates
$\|\xi\|^2\wedge 1$. Furthermore, $m(d\xi)$ is a Borel measure on
$S_d^+\setminus\{0\}$ which satisfies \eqref{eq: m}. We also know
from \cite[Theorem 2.4 and Definition 2.3]{cfmt} that jumps of
infinite total variation may only occur parallel to the boundary. In
terms of admissibility conditions of the parameters, this is
expressed in eq.~\eqref{eq: m} as well as the following condition:
\begin{equation}\label{eq: mu*}
   \int\limits_{S_d^+\setminus\{0\}} \langle \xi, u\rangle \langle \mu(d\xi),x\rangle <
\infty,\quad\text{ $x,u\in S_d^+$ with $\langle x, u\rangle=0$.}
  \end{equation}
Furthermore, the drift  must be inward pointing at the boundary.
That is expressed in the terms of the positivity of the constant
drift $b\in S_d^+$ (as above), as well as the following constraint
on the linear part $\widetilde B$ (a linear map from $S_d$ to
$S_d$):
\begin{equation}\label{eq: inwarddrift}
\langle \widetilde B(x),u\rangle -
\int\limits_{S_d^+\setminus\{0\}}\left\langle\chi(\xi),u\right\rangle
\langle \mu(d\xi),x\rangle \geq 0,\quad\text{ $x,u\in S_d^+$ with
$\langle x, u\rangle=0$.}
\end{equation}
Note that $\langle x,u\rangle=0$ is equivalent to $xu=ux=0$, that is
$x,u\in\partial S_d^+$, see also \cite[Lemma 4.1 (i)--(iii)]{cfmt}.

Suppose for a moment that the validity of \eqref{eq: mu} had already been shown. Then
$\mu$ integrates $\|\chi(\xi)\|\leq \|\xi\|\wedge 1$ and therefore
a new drift may be introduced as
\begin{equation}\label{the new drift}
B(x):=\widetilde B(x)-\int\limits_{S_d^+\setminus\{0\}}\chi(\xi)
\langle \mu(d\xi),x\rangle \end{equation} which, in view of
\eqref{eq: inwarddrift} satisfies admissibility condition \eqref{eq:
betaij}. Hence the proof of the theorem were settled. So the essential point of the statement
is \eqref{eq: mu}. We use standard Euclidean coordinates on $S_d$
for the remainder of the proof; all indices range between $1$ and
$d$, if not otherwise stated. Let $\{c^{ij},i\leq j\}$ denote the
canonical basis of $S_d$, that is, the $kl$th coefficient of
$c^{ij}$ equals
\[
c^{ij}_{kl}=\delta_{ik}\delta_{jl}+\delta_{jk}\delta_{il}(1-\delta_{ij}),
\]
where $\delta_{ij}$ denotes the Kronecker delta. If $i=j$ then
$c^{ii}_{kl}=\delta_{ik}\delta_{il}$ (the diagonal matrix with a $1$
in the $i$th diagonal entry and zeros everywhere else). Otherwise
$c^{ij}$ is zero except in the $ij$th and $ji$th entry, where it is
equal $1$. We may evaluate $\mu$ coordinate wise as
$\mu(A)=(\mu_{ij}(A))_{ij}$, $A\in S_d$ such that $0\notin\bar A$,
the latter denoting the topological closure of the set $A$. Let
$c_i^*:=1-c^{ii}$, where $I$ is the unit matrix. Then clearly
$\langle c^{ii}, c_i^*\rangle=0$, and by eq.~\eqref{eq: mu*} it
holds that
\begin{equation}\label{eq: mu1}
   \int\limits_{B^\circ_1} \xi_{ii} \mu_{jj}(d\xi) <
\infty,\quad\text{ $1\leq i,j\leq d$, $i\neq j$}.
  \end{equation}
We show now that a similar integrability condition must also hold
for $1\leq i=j\leq d$. To circumvent integrability issues at the
origin, the measure $\mu$ is pierced as follows near $0$: For $\epsilon>0$ we introduce the new (and by construction
finite measures) $\mu^{\varepsilon}(d\xi):=\mu(d\xi)
1_{\varepsilon<\|\xi\|\leq 1}(d\xi)$. In particular,
\[
\mu_{ij}^\varepsilon(d\xi)=\mu_{ij} 1_{\varepsilon<\|\xi\|\leq 1
}(d\xi)
\]
are all signed finite measures $(1\leq i,j\leq d)$  such that for
all $\varepsilon>0$ we have
\begin{equation}\label{eq 1}
-\infty<
\int\limits_{S_d^+}\xi_{kl}\mu_{ij}^\varepsilon(d\xi)<\infty,\quad
\text{ $1\leq i,j,k,l\leq d$}.
\end{equation}
By \eqref{eq: mu1}, there exists a positive constant $M$ such that
for all $\varepsilon>0$
\begin{equation}\label{eq 2}
0\leq \int\limits_{S_d^+}\xi_{ii}\mu_{jj}^\varepsilon(d\xi)<M,\quad
i\neq j.
\end{equation}
We introduce now the following boundary points of $S_d^+$.
\[
e_\pm^{ij}:= c^{ii}\pm c^{ij}+c^{jj},\quad i\neq j
\]
By construction $\langle e_+^{ij},e_-^{ij}\rangle=0$. Setting
$x=e_+^{ij}$ and $u=e_-^{ij}$ and applying \eqref{eq: mu*}, we must
have
\begin{equation*}\label{eq: mu3a}
0\leq\int\limits_{B^\circ_1}
(\xi_{ii}-2\xi_{ij}+\xi_{jj})(\mu_{ii}(d\xi)+2\mu_{ij}(d\xi)+\mu_{jj}(d\xi))
< \infty,\quad\text{ $i\neq j$}.
\end{equation*}
Similarly, we obtain by using $x=e_-^{ij}$ and $u=e_+^{ij}$ that
\begin{equation*}\label{eq: mu3b}
0\leq \int\limits_{B^\circ_1}
(\xi_{ii}+2\xi_{ij}+\xi_{jj})(\mu_{ii}(d\xi)-2\mu_{ij}(d\xi)+\mu_{jj}(d\xi))
< \infty,\quad i\neq j.
\end{equation*}
Accordingly, there exists a constant positive $M_1$ we have, for all
$\varepsilon>0$,
\begin{equation}\label{eq: mu4a}
0\leq\int\limits_{B^\circ_1}
(\xi_{ii}-2\xi_{ij}+\xi_{jj})(\mu_{ii}^\varepsilon(d\xi)+2\mu_{ij}^\varepsilon(d\xi)+\mu_{jj}^\varepsilon(d\xi))
<M_1,\quad i\neq j.
\end{equation}
and
\begin{equation}\label{eq: mu4b}
0\leq\int\limits_{B^\circ_1}
(\xi_{ii}+2\xi_{ij}+\xi_{jj})(\mu_{ii}^\varepsilon(d\xi)-2\mu_{ij}^\varepsilon(d\xi)+\mu_{jj}^\varepsilon(d\xi))
< M_1,\quad i\neq j.
\end{equation}
Summing up \eqref{eq: mu4a} and \eqref{eq: mu4b} we therefore obtain
\begin{align*}
0\leq\int\limits_{B^\circ_1}&
\left(\xi_{ii}\mu_{ii}^\varepsilon(d\xi)-2\xi_{ij}\mu_{ij}^\varepsilon(d\xi)+\xi_{jj}\mu_{jj}^\varepsilon(d\xi)\right)\\\nonumber
&+\int\limits_{B^\circ_{\leq
1}}\left(\xi_{ii}\mu_{jj}^\varepsilon(d\xi)-2\xi_{ij}\mu_{ij}^\varepsilon(d\xi)+\xi_{jj}\mu_{ii}^\varepsilon(d\xi)\right)<2M_1,
\end{align*}
for all $i\neq j$. The two integrals are non-negative, because
$\mu^\varepsilon$ is an $S_d^+$ valued measure. We therefore
conclude that both of them are finite:
\begin{align}\label{eq: mu5a}
0\leq\int\limits_{B^\circ_1}&
\left(\xi_{ii}\mu_{ii}^\varepsilon(d\xi)-2\xi_{ij}\mu_{ij}^\varepsilon(d\xi)+\xi_{jj}\mu_{jj}^\varepsilon(d\xi)\right)<2M_1,\quad
i\neq j\\\label{eq: mu5b} 0\leq\int\limits_{B^\circ_{\leq
1}}&\left(\xi_{ii}\mu_{jj}^\varepsilon(d\xi)-2\xi_{ij}\mu_{ij}^\varepsilon(d\xi)+\xi_{jj}\mu_{ii}^\varepsilon(d\xi)\right)<2M_1,\quad
i\neq j
\end{align}
By subtracting \eqref{eq 2} from \eqref{eq: mu5b} twice, once for
$i,j$ and then $j,i$, we have
\[
-M_1<\int\limits_{B^\circ_1}\xi_{ij}\mu_{ij}^\varepsilon(d\xi)<M,\quad
i\neq j
\]
for all $\varepsilon>0$. Plugging this information back into
\eqref{eq: mu5a} and using the fact that $\xi_{ii}\geq 0$, and
$\mu^\varepsilon$ is positive semidefinite, we obtain
\[
0\leq\int\limits_{B^\circ_1}
\left(\xi_{ii}\mu_{ii}^{\varepsilon}(d\xi)+\xi_{jj}\mu_{jj}^\varepsilon(d\xi)\right)<2(M_1+M).
\]
The choices of $i$ was arbitrary. Taking into account \eqref{eq:
mu1} and the preceding uniform estimate in $\varepsilon$, we finally
conclude
\begin{equation}\label{eq: mufinal}
  0\leq  \int\limits_{B^\circ_1} \xi_{ii} \mu_{jj}(d\xi) <
\infty,\quad 1\leq i,j\leq d.
  \end{equation}
Define the positive measure $\tr(\mu)(d\xi)$ on Borel sets $A$ with
$0\notin \bar A$ by $\tr(\mu(A))$. Eq.~\eqref{eq: mufinal} implies
immediately
\begin{equation}\label{eq: mufinal1}
   \int\limits \limits_{B^\circ_1} \tr(\xi)\tr(\mu)(d\xi) <
\infty.
  \end{equation}
We finally show the admissibility condition \eqref{eq: mu}:  Let $\xi$ be a positive semidefinite matrix with diagonalization $\xi=UD U^\top$, where $U$ is orthogonal and $D=\diag(\lambda_1,\dots,\lambda_d)$.
By using this diagonalization and the cyclic property of the trace, we obtain
\begin{align}\label{eq: normestimate}
\|\xi\|^2&=\tr(UD^2U)=\tr(D^2)=\sum_{i=1}^d \lambda_i^2 \leq (\sum_{i=1}^d \lambda_i)^2\\\nonumber&=(\tr D)^2=(\tr UD U)^2=\tr(\xi)^2,
\end{align}
where $\leq $ follows from the non-negativity of the eigenvalues $\lambda_i$. Using this technical detail,
we infer from \eqref{eq: mufinal1} the following estimate:
\begin{equation}\label{finite ex}
\int_{S_d^+\setminus\{0\}}(\|\xi\|\wedge 1)\tr (\mu)(d\xi)<\infty.
\end{equation}
By Lemma \ref{norm versus trace lemma} below we may conclude the validity of condition \eqref{eq: mu}. Hence the definition of $B$ by eq.~\eqref{the new drift} is legitimate.
\end{proof}
The following technical statement has just been used and again will be used in the proof of Theorem \ref{th41}:
\begin{lemma}\label{norm versus trace lemma}
For any non-negative Borel--measurable function $g$ we have
\[
\|\int_{S_d^+\setminus\{0\}} g(\xi)\mu(d\xi)\|\leq \int_{S_d^+\setminus\{0\}} g(\xi)\tr (\mu)(d\xi)\leq \infty.
\]
\end{lemma}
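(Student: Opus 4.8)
The plan is to reduce the estimate to the elementary norm--trace inequality for positive semidefinite matrices already recorded in \eqref{eq: normestimate}, after checking that the matrix $m:=\int_{S_d^+\setminus\{0\}} g(\xi)\mu(d\xi)$ is a genuine, absolutely integrable element of $S_d^+$. If the middle quantity $\int_{S_d^+\setminus\{0\}} g(\xi)\tr(\mu)(d\xi)$ is infinite there is nothing to prove (the final ``$\leq\infty$'' is merely recording that this is an extended real number), so I would assume it finite and argue that $m$ is then well defined. Since the diagonal entries $\mu_{ii}=\langle\mu,c^{ii}\rangle$ are non-negative measures and $\tr(\mu)=\sum_{i}\mu_{ii}$, finiteness of $\int g\,\tr(\mu)(d\xi)$ forces $\int g\,\mu_{ii}(d\xi)<\infty$ for each $i$. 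Positive semidefiniteness of $\mu$ applied to each $2\times 2$ principal block gives $|\mu_{ij}(E)|\leq\sqrt{\mu_{ii}(E)\mu_{jj}(E)}\leq\tfrac12(\mu_{ii}(E)+\mu_{jj}(E))$ for every Borel set $E$, whence the total variations obey $|\mu_{ij}|\leq\tfrac12(\mu_{ii}+\mu_{jj})$ as measures; consequently $\int g\,d|\mu_{ij}|<\infty$ and $m$ has absolutely convergent entries.

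Next I would verify $m\in S_d^+$. For arbitrary $v\in S_d^+$ one has $\langle m,v\rangle=\int g(\xi)\,\langle\mu(d\xi),v\rangle$, and $\langle\mu(\cdot),v\rangle=\tr(\mu(\cdot)\,v)$ is a non-negative scalar measure because $\mu$ is $S_d^+$-valued and $v\succeq 0$; since $g\geq 0$ this yields $\langle m,v\rangle\geq 0$ for every $v\in S_d^+$. By self-duality of the cone $S_d^+$ under $\langle\cdot,\cdot\rangle$ this gives $m\succeq 0$.

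It then remains to identify the trace of $m$ with the weighted trace measure and to invoke \eqref{eq: normestimate}. Because $\tr(\cdot)=\langle\cdot,I\rangle$ is a continuous linear functional, I may interchange it with the (componentwise, absolutely convergent) integral to obtain $\tr(m)=\langle m,I\rangle=\int g(\xi)\,\langle\mu(d\xi),I\rangle=\int g(\xi)\,\tr(\mu)(d\xi)$. Applying \eqref{eq: normestimate} to the positive semidefinite matrix $m$ gives $\|m\|\leq\tr(m)$, and chaining the two identities yields $\|\int g\,d\mu\|\leq\int g\,\tr(\mu)(d\xi)$, as asserted.

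The argument is essentially routine once \eqref{eq: normestimate} is available; the only point demanding any care is the reduction establishing that $m$ is an honest, absolutely integrable positive semidefinite matrix rather than a merely formal expression, which is precisely where the domination of the off-diagonal total variations by the diagonal measures is used. Everything else is linearity of the trace and self-duality of the cone.
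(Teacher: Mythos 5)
Your proof is correct, and it rests on the same key inequality as the paper's, namely the norm--trace estimate \eqref{eq: normestimate} for positive semidefinite matrices; however, the measure-theoretic reduction is organized differently. The paper applies the inequality \emph{setwise}: since $\mu(B)\in S_d^+$ for every admissible Borel set $B$, one has $\|\mu(B)\|\leq \tr(\mu(B))$, and the claim for general $g\geq 0$ then follows by approximating $g$ with non-negative simple functions (the triangle inequality gives the bound for simple functions, and passing to the limit handles the general case, with well-definedness of $\int g\,d\mu$ coming along for free from the uniform bound). You instead apply the inequality once, at the end, to the integrated matrix $m=\int g\,d\mu$, which obliges you to first check that $m$ is an honest element of $S_d^+$: you do this by dominating the off-diagonal total variations by the diagonal measures (via the $2\times 2$ principal minors) to get absolute convergence of the entries, and by self-duality of the cone to get $m\succeq 0$, after which $\tr(m)=\int g\,\tr(\mu)(d\xi)$ and \eqref{eq: normestimate} finish the argument. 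Both routes are sound and yield the same statement, including the convention that nothing needs proving when the right-hand side is infinite; yours is more explicit about integrability issues, while the paper's setwise-plus-simple-functions argument is shorter because positivity is only ever invoked on the values $\mu(B)$, where it is part of the definition of an $S_d^+$-valued measure rather than something to be verified for the integral.
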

\begin{proof}
Since $\mu$ is a positive semidefinite measure, we have
by eq.~\eqref{eq: normestimate} for any Borel set $B\subset S_d^+\setminus\{0\}$ the estimate $\|\mu(B)\|\leq \tr(\mu(B))$. Hence approximating the function $g$ by non-negative simple functions, the assertion follows. In particular, the integral \eqref{eq: mu} must be finite whenever \eqref{finite ex} is finite.
\end{proof}

\subsection{The Semimartingale Decomposition} Suppose $X$ is a conservative
\footnote{For sufficient and necessary condition of
conservativeness, see \cite[Remark 2.5]{cfmt} and \cite[Section
3]{may_smi_09}.} affine process on $S_d^+$, $d\geq 2$, with
admissible parameter set $(\alpha, b, B,0,0, m(d\xi), \mu(d\xi))$.
In view of the Feller property \cite[Theorem 2.4]{cfmt} of $X$, for
each initial state $x\in S_d^+$, there exists a modification of
$X_t:=(X^x)_{t\geq 0}$ on the canonical path space, which is a
c\`adl\`ag semimartingale. Since we know that the jumps of $X$ are
of total finite variation, we have as an immediate consequence of
\cite[Theorem 2.6]{cfmt},
\begin{theorem}\label{semimartingale}
Let $\Sigma$ be a $d\times d$ matrix such that
$\Sigma^\top\Sigma=\alpha$. Then there exists, possibly on an
enlargement of the probability space, a $d \times d$-matrix $W$ of
standard Brownian motions such that $X$ admits the following
representation
\[
X_t=x+ bt+\int\limits_0^t
B(X_{s})ds+\int\limits_0^t\left(\sqrt{X_{s}}dW_s\Sigma+\Sigma^{\top}dW_s\sqrt{X_{s}}\right)
+\int_0^t\int_{S_d^+}\xi\,\mu^X(ds,d\xi),
\]
where $\mu^X(d\xi,ds)$ is the random measure associated with the
jumps of $X$, having compensator
\[
\nu(dt,d\xi)=(m(d\xi)+\langle X_t,\mu(d\xi)\rangle)dt.
\]
\end{theorem}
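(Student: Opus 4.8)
The plan is to read off the canonical semimartingale decomposition directly from the generator $\mathcal A$ of Theorem \ref{th: maintheorem} and then to exploit the finite variation of jumps, now guaranteed by that theorem, in order to dispense with the truncation function altogether. By \cite[Theorem 2.6]{cfmt} the c\`adl\`ag modification of $X$ is a semimartingale whose characteristics relative to a truncation function $\chi$ are $(B^{(\chi)},C,\nu)$, where the jump compensator is $\nu(dt,d\xi)=(m(d\xi)+\langle X_t,\mu(d\xi)\rangle)\,dt$, the continuous part $X^c$ has the quadratic covariation governed by $\alpha$ and $X$, and the predictable drift is $B^{(\chi)}_t=\int_0^t b^{(\chi)}(X_s)\,ds$. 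First I would match the generator of Theorem \ref{th: maintheorem} -- in which the constant jump part $m$ carries no truncation while the linear jump part $\langle\mu,x\rangle$ is truncated by $\chi$ -- against the standard integro-differential form with both jump parts truncated, which yields
\[
b^{(\chi)}(x)=b+\widetilde B(x)+\int_{S_d^+\setminus\{0\}}\chi(\xi)\,m(d\xi),
\]
with $\widetilde B$ the linear drift appearing before the substitution \eqref{the new drift}.

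Next I would invoke Theorem \ref{th: maintheorem}: condition \eqref{eq: mu} together with \eqref{eq: m} and Lemma \ref{norm versus trace lemma} shows that the total jump kernel $m(d\xi)+\langle X_s,\mu(d\xi)\rangle$ integrates $\|\xi\|\wedge1$. Consequently $\chi(\xi)\star\nu_t$ is a.s.\ finite and the small-jump compensation decouples,
\[
\chi(\xi)\star(\mu^X-\nu)_t+(\xi-\chi(\xi))\star\mu^X_t=\xi\star\mu^X_t-\chi(\xi)\star\nu_t,
\]
where
\[
\chi(\xi)\star\nu_t=t\int_{S_d^+\setminus\{0\}}\chi(\xi)\,m(d\xi)+\int_0^t\int_{S_d^+\setminus\{0\}}\chi(\xi)\,\langle X_s,\mu(d\xi)\rangle\,ds.
\]

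Substituting this into the canonical decomposition $X_t=x+B^{(\chi)}_t+X^c_t+\chi\star(\mu^X-\nu)_t+(\xi-\chi)\star\mu^X_t$, the constant truncation correction $t\int\chi\,m$ contributed by $b^{(\chi)}$ cancels exactly against the corresponding term of $\chi\star\nu_t$, while the linear correction $\int_0^t\int\chi\,\langle X_s,\mu\rangle\,ds$ combines with $\widetilde B$ to reproduce the inward-pointing drift $B$ by its very definition \eqref{the new drift}. This leaves
\[
X_t=x+bt+\int_0^t B(X_s)\,ds+X^c_t+\int_0^t\int_{S_d^+}\xi\,\mu^X(ds,d\xi).
\]
The representation of the continuous local martingale part $X^c$ as $\int_0^t(\sqrt{X_s}\,dW_s\Sigma+\Sigma^\top dW_s\sqrt{X_s})$, with a $d\times d$ matrix $W$ of standard Brownian motions on a possibly enlarged probability space, is precisely the diffusion part identified in \cite[Theorem 2.6]{cfmt}, since $\Sigma^\top\Sigma=\alpha$ reproduces the required quadratic covariation.

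The only point requiring genuine care -- and the place where Theorem \ref{th: maintheorem} is indispensable -- is the a.s.\ finiteness of the uncompensated jump integral $\int_0^t\int_{S_d^+}\xi\,\mu^X(ds,d\xi)$; without \eqref{eq: mu} one could write the jump part only in compensated form, as the $d=1$ example in the introduction demonstrates. The hard part thus reduces, by passing to compensators, to the finiteness of $\int_0^t\int(\|\xi\|\wedge1)(m+\langle X_s,\mu\rangle)(d\xi)\,ds$, which is exactly what \eqref{eq: m}, \eqref{eq: mu} and Lemma \ref{norm versus trace lemma} deliver; everything else is a rearrangement of terms already furnished by \cite[Theorem 2.6]{cfmt}.
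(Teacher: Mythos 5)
Your proposal is correct and follows essentially the same route as the paper: the paper states Theorem \ref{semimartingale} as an immediate consequence of \cite[Theorem 2.6]{cfmt} once Theorem \ref{th: maintheorem} guarantees jumps of finite total variation. Your write-up merely makes explicit the bookkeeping (removal of the truncation function, cancellation of the constant correction $t\int\chi\,m(d\xi)$, and absorption of the linear correction into $B$ via \eqref{the new drift}) that the paper leaves implicit.
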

Note that if the drift is of the particular form $B(x)=\beta
x+x\beta^\top$, where $\beta$ is a real $d\times d$ matrix, if
$b=\delta \alpha$ ($\delta\geq 0$) and in the absence of jumps, $X$
is a Wishart process (\cite{bru,alfonsi,pfaffel}).

\section{The Fourier-Laplace transform of affine processes}\label{sec: FLT}
Affine processes on positive semidefinite
matrices are defined in terms of the Laplace transform of their
transition probabilities, eq.~\eqref{def: affine process}. In general, the Laplace transform is a natural choice
for integral transform of generalized functions on proper cones such
as $S_d^+$.  However, Duffie, Filipovi\'c and
Schachermayer \cite{dfs} have defined affine processes on $\mathbb
R_+^m\times\mathbb R^n$ in terms of the exponentially affine form of
their characteristic function.  Only in the one dimensional case
$\mathbb R_+$, the two state-spaces coincide and therefore
also the two definitions of the affine property, either by the Laplace transform (Kawazu and Watanabe
\cite{KW}) or by the characteristic function.

Therefore, the question whether the
characteristic functions of a positive semidefinite affine process
is indeed exponentially affine in the state, is of considerable
interest. We will denote this property as being ``affine in the sense of Duffie, Filipovi\'c and Schachermayer''.

Unless the diffusion coefficient $\alpha$ vanishes, $X$ need not be
infinitely divisible, or equivalently, infinitely decomposable (for
the definition and characterization of these properties in the
affine Markov setting, see \cite[Definition 2.7, Example 2.8 and
Theorem 2.9]{cfmt}). This complicates the problem of extending the
affine formula eq.~\eqref{def: affine process} to the complex
domain, because it is not anymore guaranteed that the
Fourier-Laplace transform of $X$ exhibits no zeros, as is in the
infinite divisible case (\cite[Theorem 25.17]{Sato1999}) (which is a necessary condition
to write it in an exponentially affine way). From the
ODE perspective, there is a related technical problem, namely to
show that the real part of $\psi(t,u)$ as solution of the system of
generalized Riccati equations (eqs.~\eqref{eq: phi1}--\eqref{eq:
psi1} below) with imaginary initial data does not explode in finite
time. Indeed, we have the estimate
\[
\vert e^{-\phi(t,u)-\langle\psi(t,u),x\rangle}\vert\leq
e^{-\Real(\phi(t,u))-\langle\Real(\psi(t,u)),x\rangle}
\]
and if the real part of $\psi$ explodes in finite time, then the
characteristic function must have a zero  \footnote{We note that the $\phi$ coefficient
does not matter here: $\Real(\phi(t,u))\geq 0$ can be inferred from the specific form
of the generalized Riccati differential equations.}. In this section we extend
the affine transform formula to the full Fourier-Laplace transform,
under the premise that the diffusion component must be
non-degenerate or equals zero. For technical difficulties in the
degenerate case, see Remark \ref{rem: technical}.

We denote by $\mathcal S(S_d^+)$ the complex tube $S_d^++iS_d$,
and similarly $\mathcal S(S_d^{++})=S_d^{++}+i S_d$
and $\mathcal S(\mathbb R_+)=\mathbb R_++i \mathbb R$.
\begin{theorem}\label{th41}
Let $X$ be an affine process on $S_d^+$ $(d\geq 2)$, with a
diffusion coefficient $\alpha$ which is either invertible or zero. Then the
affine property \eqref{def: affine process} holds for all $t\geq 0$,
$x\in S_d^+$, and for all $u\in \mathcal S(S_d^{++})$, with
exponents $\phi: \re_+\times \mathcal S(S_d^{++}) \rightarrow
\mathcal S(\mathbb R_+)$ and $\psi: \re_+ \times \mathcal S(
S_d^{++}) \rightarrow \mathcal S(S_d^{++})$ which are the unique
global solutions of the generalized Riccati differential equations
\begin{align}\label{eq: phi1}
\partial_t\phi(t,u)&=\langle b,\psi(t,u)\rangle + c-\int\limits_{S_d^+\setminus\{0\}}\left(e^{-\langle \psi(t,u),\xi\rangle}-1\right)m(d\xi),\\\label{eq: psi1}
\partial_t\psi(t,u)&=-2\psi(t,u)\alpha\psi(t,u)+B^\top(\psi(t,u))+\gamma
\\\nonumber&\quad\quad-\int\limits_{S_d^+\setminus\{0\}}\left(e^{-\langle \psi(t,u),\xi\rangle}-1\right)\mu(d\xi)
\end{align}
given initial data $\phi(0,u)=0,\,\psi(0,u)=u$.
\end{theorem}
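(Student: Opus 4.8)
The plan is to treat \eqref{eq: phi1}--\eqref{eq: psi1} as a genuine ODE on the complex tube $\mathcal S(S_d^{++})$ and to identify its flow with the Fourier--Laplace transform by analytic continuation from the real Laplace case of \cite{cfmt}. The decisive preparation is Theorem \ref{th: maintheorem}: because the jumps are of finite variation, the generator, and hence the Riccati vector field, carries no truncation, and the integral term $I(w):=\int_{S_d^+\setminus\{0\}}(e^{-\langle w,\xi\rangle}-1)\mu(d\xi)$ is well controlled. Indeed, for $\Real w\succeq 0$ one has $|e^{-\langle w,\xi\rangle}-1|\le\min(2,\|w\|\,\|\xi\|)$, so Lemma \ref{norm versus trace lemma} together with \eqref{finite ex} and \eqref{eq: mu} yields the linear bound $\|I(w)\|\le C(1+\|w\|)$; the same computation shows $I$ to be analytic and locally Lipschitz on $\mathcal S(S_d^{++})$. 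Writing $F(w)=-2w\alpha w+B^\top(w)+\gamma-I(w)$, the field $F$ is analytic on the open tube, so \eqref{eq: psi1} has a unique local solution depending analytically on the initial datum $u$, and $\phi$ is recovered from \eqref{eq: phi1} by quadrature (with $\Real\phi\ge 0$ built in).

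First I would show that the closed tube $\{\,\Real\psi\succeq 0\,\}$ is forward invariant by a sub--tangentiality (Nagumo) argument. At a boundary point, where $p:=\Real\psi$ is singular, pick $u_0\in S_d^+$ with $\langle p,u_0\rangle=0$, equivalently $pu_0=u_0p=0$; one checks $\langle \Real F(\psi),u_0\rangle\ge 0$ term by term: the quadratic contribution $-2\langle p\alpha p,u_0\rangle=-2\tr(\alpha p u_0 p)$ vanishes because $u_0p=0$, the indefinite cross term gives $2\langle q\alpha q,u_0\rangle=2\tr(\alpha\,q u_0 q)\ge 0$ with $q:=\mathrm{Im}\,\psi$, the linear drift gives $\langle p,B(u_0)\rangle\ge 0$ by \eqref{eq: betaij}, $\langle\gamma,u_0\rangle\ge 0$, and the jump term equals $\int(1-e^{-\langle p,\xi\rangle}\cos\langle q,\xi\rangle)\langle\mu(d\xi),u_0\rangle\ge 0$ since $\mu$ is $S_d^+$--valued. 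Hence $\Real\psi(t,u)\succeq 0$ as long as the solution exists.

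The crux is non--explosion, and this is where the hypothesis on $\alpha$ enters; I expect it to be the main obstacle, because the cross term $2q\alpha q$ in the equation for $\Real\psi$ is indefinite in sign. If $\alpha=0$ the field has linear growth and $\tfrac{d}{dt}\|\psi\|\le C(1+\|\psi\|)$ already precludes finite--time blow--up. If $\alpha$ is invertible, write $\alpha=\Sigma^\top\Sigma$ with $\Sigma$ invertible and substitute $\eta=\Sigma\psi\Sigma^\top=:P+\im Q$, which turns the quadratic term into $-2\eta^2$ and preserves $\Real\eta=P\succeq 0$. For the Lyapunov function $V=\|P\|^2+\tfrac12\|Q\|^2$ a direct computation gives $\dot V=-4\tr(P^3)+2\tr(P R_P)+\tr(Q R_Q)$, where $R_P,R_Q$ collect the linearly growing remainders: the dangerous cubic terms cancel, since the coefficient $+4$ of $\tr(PQ^2)$ coming from $\|P\|^2$ is exactly matched by the $-4$ coming from $\tfrac12\|Q\|^2$. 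Using $\tr(P^3)\ge 0$ (as $P\succeq 0$) and $\|R_P\|,\|R_Q\|\le C(1+\|\eta\|)$ one gets $\dot V\le C(1+V)$, so Gronwall rules out blow--up. Consequently $\psi$, and then $\phi$, extend to global solutions on $\re_+\times\mathcal S(S_d^{++})$ with values in $\mathcal S(S_d^+)$.

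Finally I would identify the flow with the transform. For fixed $t,x$ the map $u\mapsto P_te^{-\langle u,x\rangle}=\int_{S_d^+}e^{-\langle u,\xi\rangle}p_t(x,d\xi)$ is holomorphic on $\mathcal S(S_d^{++})$ (differentiation under the integral being justified by the dominating function $e^{-\langle\Real u,\xi\rangle}\le 1$), while $u\mapsto e^{-\phi(t,u)-\langle\psi(t,u),x\rangle}$ is holomorphic by the analytic dependence of the now global solution on $u$. On the totally real slice $u\in S_d^{++}$ the two agree: the restriction of $(\phi,\psi)$ to real data solves the real Riccati system, which by Theorem \ref{th: maintheorem} is precisely the one characterising the Laplace transform in \cite{cfmt}, so uniqueness forces coincidence there. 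Since the tube is connected and $S_d^{++}$ is a maximal totally real slice, the identity theorem propagates the equality to all of $\mathcal S(S_d^{++})$, which is \eqref{def: affine process} in the complex domain. (Alternatively, using the decomposition of Theorem \ref{semimartingale} and It\^o's formula one checks directly that $e^{-\phi(t-s,u)-\langle\psi(t-s,u),X_s\rangle}$ is a bounded martingale on $[0,t]$, the finite variation of the jumps letting the jump integral converge without truncation.) The a priori bound $|e^{-\phi(t,u)-\langle\psi(t,u),x\rangle}|=|P_te^{-\langle u,\cdot\rangle}(x)|\le P_te^{-\langle \Real u,\cdot\rangle}(x)=e^{-\phi(t,\Real u)-\langle\psi(t,\Real u),x\rangle}$ then yields, upon scaling $x$ over $S_d^+$, the comparison $\Real\psi(t,u)\succeq\psi(t,\Real u)$; strict positivity of the latter for $\Real u\in S_d^{++}$ upgrades the range to $\mathcal S(S_d^{++})$, as claimed.
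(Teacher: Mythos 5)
Your overall architecture---local analytic solvability of \eqref{eq: psi1} on the open tube, linear bounds on the jump integral via Theorem \ref{th: maintheorem} and Lemma \ref{norm versus trace lemma}, a boundary-invariance step, a Lyapunov non-explosion estimate, and identification with the Fourier--Laplace transform by the identity theorem from the real slice $S_d^{++}$---is the same as the paper's, and two of your ingredients are correct variants of the paper's: your bound $\|I(w)\|\le C(1+\|w\|)$ is in substance the paper's $K_1,K_2$ decomposition, and your Lyapunov function $V=\|P\|^2+\tfrac12\|Q\|^2$ for $\eta=\Sigma\psi\Sigma^\top$, with the exact cancellation of the cubic terms $\tr(PQ^2)$, is a legitimate substitute for the paper's reduction to $\alpha\in\{0,I\}$ combined with the appendix inequality $\Real\tr(b\bar a^\top a)\ge 0$ (Lemma \ref{lemma est}); both routes yield $\partial_t\|\cdot\|^2\le C(1+\|\cdot\|^2)$ and conclude by Gronwall.

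The genuine gap is in the boundary step, and it infects the identification. Your Nagumo argument proves only that the \emph{closed} tube $\mathcal S(S_d^+)$ is forward invariant, i.e.\ $\Real\psi(t,u)\succeq 0$; it does not exclude that a trajectory started in $\mathcal S(S_d^{++})$ reaches $\partial S_d^++iS_d$ in finite time. But the Riccati vector field is holomorphic only on the open tube: for $\Real w\not\succeq 0$ the tail integral $\int_{\|\xi\|>1}\left(e^{-\langle w,\xi\rangle}-1\right)\mu(d\xi)$ can diverge, since \eqref{eq: mu} controls only the mass of the tail and not the exponential growth of the integrand, so there is in general no holomorphic (indeed not even locally Lipschitz) extension across $\partial S_d^++iS_d$. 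Consequently the analytic dependence of $\psi(t,\cdot)$ on $u$, which you invoke to make $u\mapsto e^{-\phi(t,u)-\langle\psi(t,u),x\rangle}$ holomorphic and to run the identity theorem, is guaranteed only for those $u$ whose trajectory stays in the \emph{open} tube up to time $t$---exactly what you have not shown. Your repair at the end (deriving $\Real\psi(t,u)\succeq\psi(t,\Real u)$ from $|P_te^{-\langle u,\cdot\rangle}(x)|\le P_te^{-\langle\Real u,\cdot\rangle}(x)$) is circular, because it uses the identification $f\equiv g$ that the missing holomorphy was supposed to deliver; and the asserted ``unique global solutions with values in $\mathcal S(S_d^{++})$'' cannot be extracted from closed-tube viability alone, since uniqueness of the continuation after a boundary contact is unclear there. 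The paper closes exactly this hole \emph{before} any identification: the right-hand side $R$ of \eqref{eq: psi1} is quasi-monotone increasing with respect to $S_d^+$, so Volkmann's comparison theorem in the form of \cite[Theorem 4.8]{cfmt} applied to $\Real\psi(t,u)$ and to the real solution $\psi(t,\Real u)$ gives $\Real\psi(t,u)\succeq\psi(t,\Real u)\in S_d^{++}$ for all $t$ prior to explosion, whence trajectories never reach the boundary, \eqref{only expl} holds, and the identity-theorem step is sound. Note that your term-by-term tangency computation is essentially the verification of this quasi-monotonicity, so the fix is small: replace the Nagumo step by the comparison with the real solution (or else fully develop your parenthetical martingale argument, which avoids holomorphy in $u$ altogether, but then you must handle the killing terms $c,\gamma$ and justify It\^o's formula without assuming conservativeness).
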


For the following two results we assume as in Theorem \ref{th41},
that $d\geq 2$, and the diffusion coefficient $\alpha$ of $X$ is either
invertible or zero.

\begin{theorem}\label{th42}
The affine property \eqref{def: affine process} also holds for
$u\in\mathcal S(S_d^+)$, with exponents $\phi: \re_+\times \mathcal
S(S_d^{+}) \rightarrow \mathcal S(\mathbb R_+)$ and $\psi: \re_+
\times\mathcal S(S_d^{+}) \rightarrow \mathcal S(S_d^{+})$, being
(not necessarily unique) solutions of the generalized Riccati
differential equations \eqref{eq: phi1}--\eqref{eq: psi1}.
\end{theorem}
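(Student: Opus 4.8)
The plan is to extend the validity of the affine transform formula \eqref{def: affine process} from the half-tube $\mathcal S(S_d^{++})$ established in Theorem \ref{th41} to the closed half-tube $\mathcal S(S_d^+)$ by a continuity/approximation argument, relying crucially on the finite-variation jump conclusion of Theorem \ref{th: maintheorem}. First I would fix $t\geq 0$ and $x\in S_d^+$ and consider a sequence $u_n\in\mathcal S(S_d^{++})$ with $u_n\to u\in\mathcal S(S_d^+)$, say $u_n:=u+\tfrac1n I$ (so $\Real(u_n)=\Real(u)+\tfrac1n I\in S_d^{++}$ and $\Imaginary(u_n)=\Imaginary(u)$). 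The left-hand side $P_te^{-\langle u_n,x\rangle}=\int_{S_d^+}e^{-\langle u_n,\xi\rangle}p_t(x,d\xi)$ converges to $P_te^{-\langle u,x\rangle}$ by dominated convergence, since $|e^{-\langle u_n,\xi\rangle}|=e^{-\langle \Real(u_n),\xi\rangle}\leq 1$ uniformly on $S_d^+$ and the integrand converges pointwise; the sub-Markovian transition function gives the integrable dominating constant $1$. Thus the left-hand side extends continuously to all of $\mathcal S(S_d^+)$.

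The second, harder half is to show that the right-hand side $e^{-\phi(t,u_n)-\langle\psi(t,u_n),x\rangle}$ also converges, and to a quantity of the required exponential-affine form with exponents solving \eqref{eq: phi1}--\eqref{eq: psi1}. Here I would invoke continuous dependence of the solution of the generalized Riccati system on its initial data. The key analytic point is that the vector field on the right-hand sides of \eqref{eq: phi1}--\eqref{eq: psi1} is well-defined and (locally Lipschitz) continuous on the closed tube: the quadratic term $-2\psi\alpha\psi$ and the linear terms $B^\top(\psi)+\gamma$, $\langle b,\psi\rangle+c$ are manifestly smooth, while for the jump integrals one needs that $\int_{S_d^+\setminus\{0\}}(e^{-\langle\psi,\xi\rangle}-1)\,\mu(d\xi)$ and the analogous $m$-integral remain finite and continuous in $\psi$ as $\psi$ ranges over $\mathcal S(S_d^+)$. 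This is exactly where Theorem \ref{th: maintheorem} enters: because $\mu$ integrates $\|\xi\|\wedge1$ by \eqref{eq: mu} (and $m$ by \eqref{eq: m}), and because for $\Real(\psi)\in S_d^+$ one has $|e^{-\langle\psi,\xi\rangle}-1|\leq C(\|\xi\|\wedge1)$ on the truncation region while $|e^{-\langle\psi,\xi\rangle}-1|\leq 2$ away from $0$, the integrals converge absolutely and depend continuously on $\psi$ via dominated convergence, with Lemma \ref{norm versus trace lemma} controlling the matrix-valued $\mu$-integral by its trace. Crucially, this requires no truncation function $\chi$, unlike the degenerate-boundary situation for $d=1$.

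With a continuous (indeed locally Lipschitz) vector field on $\mathcal S(S_d^+)$, standard ODE theory yields that the solution map $u\mapsto(\phi(\cdot,u),\psi(\cdot,u))$ is continuous in the initial condition on the existence interval; since $u\in\mathcal S(S_d^+)$ can be reached as a limit of interior points $u_n$ for which global existence was already shown in Theorem \ref{th41}, one obtains $\phi(t,u_n)\to\phi(t,u)$ and $\psi(t,u_n)\to\psi(t,u)$, with the limit solving \eqref{eq: phi1}--\eqref{eq: psi1} and satisfying $\Real(\psi(t,u))\in S_d^+$, hence $\psi(t,u)\in\mathcal S(S_d^+)$. Passing to the limit in the identity valid for each $u_n$ then gives $P_te^{-\langle u,x\rangle}=e^{-\phi(t,u)-\langle\psi(t,u),x\rangle}$. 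I would emphasize that I do not claim uniqueness of $\psi$ here (the statement only asserts existence of solutions), which is consistent with the theorem removing the strict-positivity that guaranteed uniqueness in Theorem \ref{th41}. The main obstacle I anticipate is the boundary control of $\Real(\psi(t,u))$: one must rule out that the real part of $\psi$, starting on the boundary $\partial S_d^+$, instantaneously leaves $S_d^+$ or that the global existence interval shrinks in the limit. This I would handle by the a~priori estimate $|e^{-\phi-\langle\psi,x\rangle}|\leq e^{-\Real\phi-\langle\Real\psi,x\rangle}$ together with the uniform bound $|P_te^{-\langle u_n,x\rangle}|\leq1$, which forces $\langle\Real(\psi(t,u_n)),x\rangle\geq-\Real(\phi(t,u_n))$ to stay bounded below uniformly in $n$ for every $x\in S_d^+$, precluding a boundary blow-up of $\Real(\psi)$ toward $-\infty$ and thereby securing that the common existence interval does not collapse.
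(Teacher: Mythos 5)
Your overall route is the same as the paper's: approximate $u\in\mathcal S(S_d^+)$ from the interior by $u_n=u+\tfrac1n I$, apply Theorem \ref{th41} to each $u_n$, pass to the limit in \eqref{def: affine process} (dominated convergence on the probabilistic side), and identify the limiting exponents as solutions of \eqref{eq: phi1}--\eqref{eq: psi1}. The gap is in the ODE step. You claim that the right-hand side of \eqref{eq: psi1} is locally Lipschitz on the closed tube $\mathcal S(S_d^+)$ and conclude continuous dependence of $\psi(t,\cdot)$ on the initial datum. That claim is false in general: condition \eqref{eq: mu} controls only $\int(\|\xi\|\wedge 1)\,\tr(\mu)(d\xi)$ and gives no first moment of $\mu$ on $\{\|\xi\|>1\}$. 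Writing $K_2(u):=\int_{\|\xi\|>1}\bigl(1-e^{-\langle u,\xi\rangle}\bigr)\mu(d\xi)$ for the tail part of the jump integral (as in the proof of Theorem \ref{th41}), one has $K_2(0)=0$ and, by monotone convergence,
\[
\frac{1}{\epsilon}\,\tr\bigl(K_2(\epsilon I)\bigr)
=\int_{\|\xi\|>1}\frac{1-e^{-\epsilon\tr(\xi)}}{\epsilon}\,\tr(\mu)(d\xi)
\;\longrightarrow\;\int_{\|\xi\|>1}\tr(\xi)\,\tr(\mu)(d\xi)
\quad\text{as } \epsilon\downarrow 0,
\]
and the right-hand side may be $+\infty$ for admissible $\mu$ (e.g.\ $\mu$ supported on a ray with radial density $r^{-2}1_{r>1}dr$). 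So $K_2$, hence the vector field, is merely continuous and not Lipschitz at boundary points of the tube such as $\Real(u)=0$ --- which are exactly the purely imaginary initial data needed for Corollary \ref{cor42}. Without local Lipschitz continuity there is no uniqueness of solutions, and then ``continuous dependence on initial data'' is not available (the solution map is not even single-valued); indeed your own concession that $\psi$ may be non-unique is inconsistent with the continuous-dependence argument you rely on.

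What is needed instead, and what the paper does, is a compactness argument. First, since the jump integrals only make sense for $\Real(\psi)\in S_d^+$, the paper replaces $\psi$ by its projection $\pi(\psi)$ onto $S_d^+$ inside the exponentials, eqs.~\eqref{eq: phi1x}--\eqref{eq: psi1x}; this produces a vector field that is continuous on the open set $\mathcal S(S_d)$, of which the $\psi_n$ remain solutions, so that ODE theory on open domains applies. Second, the a priori estimate \eqref{bound for psi} holds with a constant $C$ depending only on the parameters, not on the initial datum, giving the uniform bound \eqref{bound for psi2}; hence for each $T>0$ the curves $\{\psi_n(t)\mid t\in[0,T]\}$ lie in a single compact set. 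Third, Lemma \ref{l2} --- an Arzel\`a--Ascoli/Peano-type statement that requires no Lipschitz condition and yields only a subsequence converging to some (possibly non-unique) solution --- gives $(\phi_{n_k},\psi_{n_k})\to(\phi(\cdot,u),\psi(\cdot,u))$ solving \eqref{eq: phi1}--\eqref{eq: psi1}, after which dominated convergence finishes the proof. Your closing ``boundary control'' argument does not fill this gap: the approximants automatically satisfy $\Real(\psi(t,u_n))\in S_d^{++}$ by Theorem \ref{th41}, and a uniform lower bound on $\langle\Real(\psi(t,u_n)),x\rangle$ produces neither convergence of the sequence nor an upper bound on $\|\psi(t,u_n)\|$; it is the uniform estimate \eqref{bound for psi2}, which you never invoke, that prevents the existence interval from collapsing and makes the compactness extraction possible.
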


Applying the above to $u\in iS_d$, we finally obtain:
\begin{corollary}\label{cor42}
$X$ is affine in the sense of Duffie, Filipovi\'c and Schachermayer
\cite{dfs}.
\end{corollary}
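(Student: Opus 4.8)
The plan is to obtain the claim directly as the specialization of Theorem \ref{th42} to purely imaginary arguments; essentially all the analytic content has already been established there, so what remains is a short observation about domains and conventions. The first step is to note that, since $0\in S_d^+$, the purely imaginary tube satisfies $iS_d\subset S_d^++iS_d=\mathcal S(S_d^+)$. Hence every $u\in iS_d$ lies in the region to which Theorem \ref{th42} applies, and the affine formula \eqref{def: affine process} together with its Riccati representation is available for such $u$.

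Next I would write an arbitrary purely imaginary argument as $u=iv$ with $v\in S_d$, so that
\[
P_te^{-\langle u,x\rangle}=\int_{S_d^+}e^{-i\langle v,\xi\rangle}p_t(x,d\xi).
\]
As $v$ ranges over $S_d$ this is, up to the harmless sign substitution $v\mapsto -v$ in the exponent, exactly the characteristic function of the transition kernel $p_t(x,\cdot)$. Theorem \ref{th42} then yields
\[
\int_{S_d^+}e^{-i\langle v,\xi\rangle}p_t(x,d\xi)=e^{-\phi(t,iv)-\langle\psi(t,iv),x\rangle},
\]
where $\phi(t,iv)\in\mathcal S(\mathbb R_+)$ and $\psi(t,iv)\in\mathcal S(S_d^+)$ are the exponents furnished by Theorem \ref{th42} as solutions of the generalized Riccati system \eqref{eq: phi1}--\eqref{eq: psi1}. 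The right-hand side is manifestly exponentially affine in the state $x$, which is precisely the property characterizing an affine process in the sense of Duffie, Filipovi\'c and Schachermayer: the characteristic function is exponentially affine in the initial state.

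Since the statement is a genuine corollary, there is no real obstacle to overcome here; the only point deserving any care is the bookkeeping of sign and normalization conventions, as the parametrization used throughout this paper carries the Laplace-transform minus signs whereas the Duffie--Filipovi\'c--Schachermayer convention is phrased directly for the characteristic function. Reconciling the two is purely cosmetic and involves no analysis. All the substantive work — the extension of the affine formula to complex arguments $u\in\mathcal S(S_d^+)$ and the Riccati representation of the exponents — has already been carried out in Theorem \ref{th42}, which itself rests on the finite total variation of the jumps established in Theorem \ref{th: maintheorem}.
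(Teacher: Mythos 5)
Your proposal is correct and follows exactly the paper's route: the paper dispatches this corollary with the single remark ``Applying the above to $u\in iS_d$,'' i.e.\ it specializes Theorem \ref{th42} to purely imaginary arguments, which is precisely what you do (with the additional, harmless bookkeeping about sign conventions spelled out). No gap here; all the substance indeed lives in Theorems \ref{th: maintheorem}--\ref{th42}.
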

\subsection{Proof of Theorem \ref{th41}}
\begin{proof}
Let $u=v+iw\in \mathcal S(S_d^{++})$. We denote by $\psi(t,v)$ the
unique global solution of equation \eqref{eq: psi1} on $S_d^{++}$,
which exists due to \cite[Proposition 5.3]{cfmt}. $\psi(t,u)$ is
defined as maximal solution of \eqref{eq: psi1} on the open domain
$\mathcal S(S_d^{++})$. Note that the right side $R(\psi)$ of eq.~\eqref{eq: psi1} is an
analytic function thereon, hence it is in particular locally Lipschitz.
Accordingly, the maximal life time of $\psi(t,u)$ equals
\[
t_+(u):=\lim_{n\rightarrow\infty}\inf\{t>0\mid \Real\psi(t,u)\in\partial S_d^+\textrm{  or  }\|\psi(t,u)\|\geq n\}
\]
and we have $0<t_+(u)\leq \infty$.

First, we show that $\psi(t,u)$ does not touch the boundary of
$\mathcal S(S_d^{++})$ in finite time. To this end, we introduce the
function $\chi(t,u):=\Real(\psi(t,u+iv))$, which is well defined for
$t\in [0,t_+(u))$ and has values in $S_d^{++}$. Denote by $u\mapsto
R(u)$ the function on the right side of \eqref{eq: psi1}. By
straightforward inspection, one observes that for all $t<t_+(u)$
\[
\partial_t\chi(t,u)-\Real(R(\chi(t,u)))\succeq 0 =\partial_t\psi(t,v)-R(\psi(t,v)).
\]
Since $R$ is an analytic and quasi-monotone increasing function on $S_d^{++}$ with respect to the cone $S_d^+$ (see \cite[Definition 4.7 and Lemma 5.1]{cfmt}, we may invoke Volkmann's comparison result in the fashion of
\cite[Theorem 4.8]{cfmt} and derive
\[
\chi(t,u)\succeq \psi(t,v), \textrm{  for all  } t<t_+(u).
\]
But $\psi(t,v)\in S_d^{++}$, for all $t\geq 0$, \cite[Proposition 5.3]{cfmt}. Hence we have shown that
$\psi(t,u)$ does not touch the boundary of $\mathcal S(S_d^{++})$, which  is $\partial S_d^+\times i S_d$, in finite time, and therefore we have
\begin{equation}\label{only expl}
t_+(u):=\lim_{n\rightarrow\infty}\inf\{t>0\mid\,\,\|\psi(t,u)\|\geq n\}.
\end{equation}
Hence it remains to show that $\psi(t,u)$ does not explode in finite
time. Since affine transformations of the state space do not effect
the blow-up property, we may without loss of generality assume that
the diffusion coefficient equals zero or equals the identity matrix.
To obtain the necessary transformation, one can adapt
\cite[Propositions 4.13 and 4.14]{cfmt}. We introduce the shorthand
notation $K:=K_1+K_2$, where
\begin{equation*}
K_1(u):=\int\limits_{0<\|\xi\|\leq 1}\left(\int_0^1\langle
u,\xi\rangle e^{-s\langle u ,\xi\rangle}ds\right)\mu(d\xi)
\end{equation*}
and
\[
K_2(u):=\int\limits_{\|\xi\|> 1}\left(1- e^{-\langle u ,\xi\rangle}\right)\mu(d\xi)
\]
Using this decomposition, we can write
\[
R(u)=-2 u\alpha u+B^\top u+\gamma+K(u).
\]
Using the Cauchy-Schwarz inequality,  Lemma \ref{norm versus trace lemma} and condition \eqref{eq: mu}, we infer the existence of a constant $C_1\geq 0$ such that for all $u\in \mathcal S(S_d^+)$
\begin{equation}\label{K1}
\|K_1(u)\|\leq \|u\|\int_{S_d^+\setminus\{0\}}(\|\xi\|\wedge 1)\tr(\mu)(d\xi)=C_1\|u\|.
\end{equation}
The same condition allows to conclude the existence of a positive constant $C_2$ such that
\begin{equation}\label{K2}
\|K_2(u)\|\leq \int_{\|\xi\|>1} 2 \tr(\mu)(d\xi)=C_2<\infty
\end{equation}
where we once again have used Lemma \ref{norm versus trace lemma}.

By Lemma \ref{lemma est} in Appendix \ref{appb} we have that
\begin{equation}\label{eq: keyestimate}
\Real\langle
\overline{\psi}(t,u),\psi(t,u)\alpha\psi(t,u)\rangle\geq 0
\end{equation}
for all $t<t_+(u)$. Using estimates \eqref{K1}--\eqref{eq: keyestimate} and the Cauchy-Schwarz inequality, the existence of
a positive constant $C$ follows, such that for all $u\in\mathcal S(S_d^{++})$ and $t<t_+(u)$,
\begin{align*}
\partial_t \| \psi(t,u) \|^2&=2\Real \langle \overline{\psi}(t,u),
R(\psi(t,u)\rangle\\&\leq 2\Real\left\langle
\overline{\psi}(t,u),B^\top(\psi(t,u))+\gamma+K(\psi(t,u))\right\rangle\\&\leq
2C(1+\|\psi(t,u)\|^2).
\end{align*}
Hence, by Gronwall's Lemma (or,
equivalently, by standard comparison for scalar-valued ODEs) we
obtain for all $t<t_+(u)$,
\begin{equation}\label{bound for psi}
\|\psi(t,u)\|\leq e^{Ct}\sqrt{1+\|u\|^2}
\end{equation}
which in view of \eqref{only expl} proves that $t_+(u)=\infty$. So
we have shown that $t\mapsto\psi(t,u)$ is the global solution of
\eqref{eq: psi1} for all $u\in\mathcal S(S_d^+)$. Moreover,
$\Real(\psi(t,u))\in S_d^{++}$ for all $t\geq 0$ and the right side
of \eqref{eq: phi1} is well defined for all $u\in\mathcal S(S_d^+)$.
Therefore plugging $\psi(t,u)$ into \eqref{eq: phi1} and integrating
with respect to time yields $\phi(t,u)$.

Now for each $t>0, x\in S_d^+$, the Fourier-Laplace transform
\[
g(u)=\mathbb E[e^{-\langle u, X_t\rangle}\mid X_0=x]
\]
and the function
\[
f(u):=e^{-\phi(t,u)-\langle x,\psi(t,u)\rangle}
\]
are complex analytic functions on $\mathcal S(S_d^{++})$, and (in view
of \eqref{def: affine process}) they coincide on set of uniqueness,
namely $S_d^{++}$. Hence $f\equiv g$ on $\mathcal S(S_d^{++})$, which
proves the assertion.
\end{proof}
\subsection{Proof of Theorem \ref{th42}}
We can write $u=v+iw$, where $v\in S_d^+$ and denote for each $n\geq
1$ the matrix $u_n:=(v+\frac{1}{n}1)+iw$, where $1$ is the unit
$d\times d$ matrix. We further denote by $\psi_n(t)$ the solution of
\eqref{eq: psi1}, subject to $\psi_n(0)=u_n$, which exists globally
due to Theorem \ref{th41} because now $u_n\in\mathcal S(S_d^{++})$.

Let $\pi(x)$ be the projection of $x\in S_d$ onto $S_d^+$, which
exists uniquely, because $S_d^+$ is a closed convex set. For
$u=v+iw\in\mathcal S(S_d)$, we slightly abuse notation and write
\[
\pi(u):=\pi(v)+iw\in\mathcal S(S_d^+)
\]
Using the continuity of the right sides of \eqref{eq:
phi1}-\eqref{eq: psi1} we may as well consider $\phi_n(t)$ and $\psi_n(t)$ as
solutions to the generalized Riccati differential equations
\begin{align}\label{eq: phi1x}
\partial_t\phi_n(t,u)&=\langle b,\psi_n(t,u)\rangle + c-\int\limits_{S_d^+\setminus\{0\}}\left(e^{-\langle \pi(\psi_n(t,u)),\xi\rangle}-1\right)m(d\xi)\\\label{eq: psi1x}
\partial_t\psi_n(t,u)&=-2\psi_n(t,u)\alpha\psi_n(t,u)+B^\top(\psi_n(t,u))+\gamma
\\\nonumber&\quad\quad-\int\limits_{S_d^+\setminus\{0\}}\left(e^{-\langle \pi(\psi_n(t,u)),\xi\rangle}-1\right)\mu(d\xi)
\end{align}
subject to $\psi_n(0)=u_n$, $\phi_n(0)=0$ on the whole domain $\mathcal S(S_d)$.

Now by estimating \eqref{bound for psi} in the proof of Theorem
\ref{th41}, there exists a uniform constant $C$, such that for each
$n$,
\begin{equation}\label{bound for psi2}
\|\psi_n(t)\|\leq e^{Ct}\sqrt{1+\|u_n\|^2}.
\end{equation}
But this means that for any $T>0$, the family of curves
\[
\{\psi_n(t)\mid t\in[0, T]\}
\]
lie in a single compact set $K$. Since $T$ is arbitrary, an
application of Lemma \ref{l2} therefore yields that there exist
functions $t\mapsto\phi(t,u),t\mapsto\psi(t,u)$ on $[0,\infty)$
which are the pointwise limits of a subsequence
$(\phi_{n_k}(t),\psi_{n_k}(t))$ $(k\rightarrow\infty)$ and they
satisfy eqs.~\eqref{eq: phi1}--\eqref{eq: psi1}. Furthermore, we
have by dominated convergence,
\begin{align*}
e^{-\phi(t,u)-\langle \psi(t,u),x\rangle}&=\lim_{k\rightarrow\infty}
e^{-\phi_{n_k}(t)-\langle \psi_{n_k}(t),x\rangle}=\lim_{k\rightarrow\infty}\mathbb
E[e^{-\langle u_{n_k},X_t\rangle}\mid X_0=x]\\&=\mathbb E[e^{-\langle
u,X_t\rangle}\mid X_0=x].
\end{align*}
This ends the proof.
\begin{remark}\label{rem: technical}\rm
\begin{itemize}
\item It can easily be seen either by numerical experiments or explicit
calculations that (an appropriate adaption of) Lemma \ref{lemma est} does not hold, if $\alpha$
not equals a scalar multiple of the unit matrix. To be more precise,
in general, the real part of
\[
\tr(\bar x x \alpha x)=\tr(x\bar x x\alpha)
\]
can be strictly negative. For instance, using
\[
\alpha=\left(\begin{array}{ll} 1 & 0\\ 0& 0\end{array}\right),\quad x=\left(\begin{array}{ll} 1 & i\\ i& 4\end{array}\right)
\]
we obtain $\Real(x)\in S_2^+$, but $\Real\tr(\bar x x \alpha x)=-1<0$. As a consequence,
we cannot derive estimate \eqref{eq: keyestimate}, which in turn is
a technical necessity to obtain the a-priori estimate \eqref{bound
for psi} resp. \eqref{bound for psi2}. However, we conjecture that the problem concerning
degenerate, nonzero diffusion coefficient $\alpha$ admits the same
answer as that of Theorem \ref{th42}.
\item It should be noted, that the main technical complication
concerning jumps in prior research had been the presence of a
truncation function in the right side of eq.~\eqref{eq: psi1}. Only
the finding of Theorem \ref{th: maintheorem} let us establish the
general a-priori estimate \eqref{bound for psi} resp. \eqref{bound for psi2}.
\end{itemize}
\end{remark}
\subsection{Examples with degenerate, nonzero diffusion}\label{sec:
ex degenerate} In the presence of a non-zero diffusion component
$\alpha$, Theorem \ref{th41} requires that $\alpha$ is invertible.
It should, however, be reported that if $X$ is ``Wishart with state
independent jump behavior", then not only is it evident that $X$ is
affine in the sense of \cite{dfs}, but also the affine character of
the Laplace transform can be extended to the domain $\mathcal
S(S_d^+)$. And in this case, we can solve the Riccati equations
explicitly, with no non-degeneracy assumption on $\alpha$.
\begin{definition}\rm
A matrix-variate basic affine jump-diffusion $X$ on $S_d^+$ (MBAJD
in short) is an affine process with parameters
$\gamma=0,\,c=0,\,\mu\equiv 0$, a constant drift
\[
b=2p \alpha,\quad p\geq \frac{d-1}{2},
\]
and with a linear drift $B$ of the particular form
\[
B(x)=\beta x+x\beta^\top,
\]
where $\beta$ is a real $d\times d$ matrix.
\end{definition}
\begin{remark}
\begin{itemize}
\item If $d=1$, and $m(d\xi)$ is a multiple of the density of an
exponential distribution, then $X$ is a BAJD as introduced by Duffie
and Garleanu in \cite{dg}.
\item If $d\geq 2$ and $m\equiv 0$, then $X$ is a Wishart process, see
\cite{alfonsi}, \cite{bru} and \cite{pfaffel}.
\end{itemize}
\end{remark}
It is quite straightforward to check that any MBAJD is a
conservative Markov process (\cite[Remark 2.5]{cfmt}) and that
eqs.~\eqref{eq: phi1}--\eqref{eq: psi1} take the particular form
\begin{align*}
\partial_t\phi(t,u)&=2p\langle \alpha,\psi(t,u)\rangle -\int\limits_{S_d^+\setminus\{0\}}\left(e^{-\langle
\psi(t,u),\xi\rangle}-1\right)m(d\xi),
\\\partial_t\psi(t,u)&=-2\psi(t,u)\alpha\psi(t,u)+\psi(t,u)\beta+\beta^\top\psi(t,u).
\end{align*}
In the following we denote by $\omega^{\beta}_t$ the flow of the
vector field $\beta x+x\beta^\top$, that is,
\[
\omega^{\beta}:\,\, \mathbb R\times S_d^+\rightarrow
S_d^+,\quad\omega^{\beta}_t(x):=e^{\beta t}x e^{\beta^\top t}.
\]
Its twofold integral $\sigma^{\beta}_t: S_d^+\rightarrow S_d^+$ for
$t\geq 0$ is denoted by
\[
\sigma^{\beta}:\,\, \mathbb R_+\times S_d^+\rightarrow
S_d^+,\quad\sigma^{\beta}_t(x)=2\int_0^t \omega^{\beta}_s(x) ds.
\]
By matrix analysis, we obtain  the following semi-explicit solutions
for initial data $u\in \mathcal S(S_d^+)$,
\begin{align*}
\phi(t,u)&=p\log\det \left(I+u
\sigma_t^\beta(\alpha)\right)-\int\limits_{S_d^+\setminus\{0\}}\left(e^{-\langle
\psi(t,u),\xi\rangle}-1\right)m(d\xi),\\
\psi(t,u)&=e^{\beta^\top
t}\left(u^{-1}+\sigma_t^\beta(\alpha)\right)^{-1}e^{\beta t}.
\end{align*}
\begin{appendix}\label{appa}
\section{Convergence of ordinary differential equations}
The following results are consequences of standard ODE theory. The
first one is clearly elaborated in \cite[Lemma 8]{ODEpaper}, and the
second one is a variant of \cite[Lemma 9]{ODEpaper} and can be
proved similarly as in \cite{ODEpaper} (the difference being that we
drop the Lipschitz continuity of $f$, hence one cannot show that
every involved subsequence in \ref{l2} converges, let alone to the
same limit).

We recall them here for the convenience of the reader, and without
any proof. We consider the system of ordinary differential equations
on $\mathbb R^m$,
\begin{equation}\label{a1}
\partial_t\psi(t) = f(t,\psi(t)),\quad
\end{equation}
subject to an initial condition $\psi(0)=u\in \mathbb R^m$. Recall
that equation~(\ref{a1}) possesses a maximal solution on a half open
interval $[0, t_+(u))$ if the function $f\colon I\times U\to E$ is
continuous. Note however that such a solution may be not unique if
$f$ is not locally Lipschitz continuous.
\begin{lemma}\label{l1}
Let $U\subset E$ be open. Let $f,f_1, f_2,\ldots$ be continuous maps
from $I\times U$ to $E$. Suppose $f$ is locally Lipschitz and $f_n$
converge to $f$ uniformly on all compact subsets of $I\times U$. Let
$\psi_n\in C^1([0,\theta_n),U)$ be maximal solutions of
\begin{equation}\label{a2}
\partial_t\psi_n(t) = f_n(t,\psi_n(t))
\end{equation}
such that $\psi_n(0)$ converge to some $u\in U$ as $n\to \infty$.
Then we have
\begin{equation}\label{a3}
t_+(u) \leq \varliminf \theta_n.
\end{equation}
Let $0\leq a<t_+(u)$ and $n_0$ be such that $\theta_n>a$ for $n>
n_0$. Then the sequence $\psi_{n_0+k}(t)$, $k=1,2,\ldots$, converges
to $\psi(t)$ uniformly on $[0,a]$ as $k\to\infty$.
\end{lemma}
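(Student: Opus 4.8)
The plan is to prove this by the classical combination of Gronwall's inequality, a bootstrapping (continuity) argument, and the escape lemma for maximal solutions: a non-continuable solution of a continuous ODE must eventually leave every compact subset of $I\times U$. Throughout, $\psi$ denotes the unique maximal solution of \eqref{a1} on $[0,t_+(u))$, uniqueness being guaranteed by the local Lipschitz hypothesis on $f$. First I would fix an arbitrary $a<t_+(u)$ and reduce everything to this fixed horizon; since both claimed conclusions are statements on $[0,a]$ for such $a$, proving them for every $a<t_+(u)$ yields \eqref{a3} by letting $a\uparrow t_+(u)$.

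The first concrete step is to build a compact tube around the limit trajectory. As $t\mapsto\psi(t)$ is continuous on $[0,a]$, its graph is a compact subset of the open set $I\times U$, so there is $\delta>0$ with
\[
K:=\{(t,x):t\in[0,a],\ \|x-\psi(t)\|\leq\delta\}
\]
a compact subset of $I\times U$. On $K$ the map $f$ is Lipschitz in the space variable with a single constant $L$ (cover $K$ by finitely many balls on which $f$ is Lipschitz), and by hypothesis $\varepsilon_n:=\sup_K\|f_n-f\|\to 0$. Writing $e_n(t):=\|\psi_n(t)-\psi(t)\|$ and subtracting the integral forms of \eqref{a2} and \eqref{a1}, I would split the integrand as $\bigl(f_n(s,\psi_n(s))-f(s,\psi_n(s))\bigr)+\bigl(f(s,\psi_n(s))-f(s,\psi(s))\bigr)$, bounding the first bracket by $\varepsilon_n$ and the second by $L\,e_n(s)$, valid \emph{as long as} $(s,\psi_n(s))\in K$. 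This gives the conditional estimate
\[
e_n(t)\leq e_n(0)+\varepsilon_n a+L\int_0^t e_n(s)\,ds,
\]
whence, by Gronwall, $e_n(t)\leq(e_n(0)+\varepsilon_n a)e^{La}=:\rho_n$, and $\rho_n\to0$ because $e_n(0)=\|\psi_n(0)-u\|\to0$ and $\varepsilon_n\to0$.

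The hard part is upgrading this conditional bound to an unconditional one, i.e.\ showing that for all large $n$ the solution $\psi_n$ is genuinely defined on all of $[0,a]$ and never leaves $K$. I would run a bootstrapping argument: set
\[
\tau_n:=\sup\{t<\min(a,\theta_n):e_n(s)\leq\delta\ \text{for all}\ s\leq t\},
\]
which is positive once $e_n(0)<\delta$. On $[0,\tau_n)$ the Gronwall bound applies and yields $e_n\leq\rho_n<\delta$ for $n$ large; by continuity of $e_n$ this forbids $e_n$ from reaching the value $\delta$ at $\tau_n$, so $\tau_n$ cannot be an interior point of $[0,\min(a,\theta_n))$ and hence $\tau_n=\min(a,\theta_n)$. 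The key remaining point, and where maximality is essential, is to exclude $\theta_n\leq a$: in that case $(t,\psi_n(t))$ would remain in the compact set $K\subset I\times U$ up to the maximal time $\theta_n<\sup I$, so $\psi_n$ would admit an extension past $\theta_n$, contradicting maximality. Therefore $\theta_n>a$ and $\tau_n=a$. This simultaneously gives $\theta_n>a$ for all large $n$, hence $\varliminf\theta_n\geq a$ and, $a$ being arbitrary, $\varliminf\theta_n\geq t_+(u)$, proving \eqref{a3}; and it gives $\sup_{[0,a]}e_n\leq\rho_n\to0$, which is exactly the asserted uniform convergence $\psi_{n_0+k}\to\psi$ on $[0,a]$. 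I expect the escape-lemma step, together with the care needed to keep the Gronwall estimate honest only while the trajectory lies in $K$, to be the main obstacle.
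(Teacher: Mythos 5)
Your proof is correct. Note that the paper itself gives no proof of Lemma \ref{l1}: it is stated explicitly ``without any proof'' as a consequence of standard ODE theory, with a pointer to \cite[Lemma 8]{ODEpaper}, and your Gronwall--bootstrap--escape-lemma argument is precisely the standard proof of that result, so there is nothing in the paper to compare it against beyond confirming correctness. Two minor points worth flagging: compactness of your tube $K$ (and hence the fact that $\varepsilon_n=\sup_K\|f_n-f\|\to 0$ follows from uniform convergence on compacts) implicitly takes the state space to be finite-dimensional, i.e.\ $E=\mathbb R^m$, which is indeed the setting of the appendix where equation \eqref{a1} is posed on $\mathbb R^m$; and your escape-lemma step is valid even though the $f_n$ are only continuous, not locally Lipschitz, since a maximal solution of \eqref{a2} remaining in the compact set $K\subset I\times U$ up to time $\theta_n<\sup I$ has a limit as $t\uparrow\theta_n$ (boundedness of $f_n$ on $K$) and can then be continued by Peano's theorem, contradicting maximality --- exactly as you argue.
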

\begin{lemma}
\label{l2} Let $U\subset \mathbb R^m$ be open. Let $f,f_1,
f_2,\ldots$ be continuous maps from $I\times U$ to $\mathbb R^m$.
Suppose $f_n$ converge to $f$ uniformly on compact subsets of
$I\times U$. Let $0<a<T$ and $\psi_n\in C^1([0,a],U)$ be solutions
of~$(\ref{a2})$ such that $\psi_n(0)$ converge to some $u\in U$ as
$n\to \infty$. If for some compact set $K\subset U$, $\psi_n(t)\in
K$ for all $t\in [0,a]$, then there exists a (not necessarily
unique) solution $\psi(t)$ of equation~(\ref{a1}) on $[0,a]$, and a
subsequence $\psi_{n_k}(t)\to \psi(t)$ and
$\partial_t\psi_{n_k}(t)\to
\partial_t\psi(t)$ uniformly on $[0,a]$.
\end{lemma}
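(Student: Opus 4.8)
The plan is to run the classical Arzel\`a--Ascoli compactness argument for solutions of an ODE whose right-hand side is merely continuous. First I would establish a uniform Lipschitz bound on the family $\{\psi_n\}$. Since every $\psi_n$ takes values in the compact set $K$ and solves $\partial_t\psi_n(t)=f_n(t,\psi_n(t))$, the pairs $(t,\psi_n(t))$ all lie in the compact set $[0,a]\times K\subset I\times U$. On this set the $f_n$ converge uniformly to the continuous map $f$, whence $\sup_n\sup_{[0,a]\times K}\|f_n\|<\infty$; consequently $\|\partial_t\psi_n(t)\|$ is bounded uniformly in $n$ and $t$, so each $\psi_n$ is Lipschitz with a common constant. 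In particular the family is uniformly bounded (it lives in $K$) and equicontinuous.

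Next I would apply the Arzel\`a--Ascoli theorem to extract a subsequence $\psi_{n_k}$ converging uniformly on $[0,a]$ to a continuous limit $\psi$, which again takes values in the compact, hence closed, set $K\subset U$. The initial value is preserved: $\psi(0)=\lim_k\psi_{n_k}(0)=u$.

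The key step is to identify the limit of the derivatives. I would show that $f_{n_k}(t,\psi_{n_k}(t))\to f(t,\psi(t))$ uniformly on $[0,a]$ by splitting
\[
\|f_{n_k}(t,\psi_{n_k}(t))-f(t,\psi(t))\|\leq \|f_{n_k}(t,\psi_{n_k}(t))-f(t,\psi_{n_k}(t))\|+\|f(t,\psi_{n_k}(t))-f(t,\psi(t))\|.
\]
The first term is at most $\sup_{[0,a]\times K}\|f_{n_k}-f\|\to 0$ by the assumed uniform convergence on compacts; the second term tends to $0$ uniformly because $f$ is uniformly continuous on the compact set $[0,a]\times K$ and $\psi_{n_k}\to\psi$ uniformly. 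This is the main obstacle, since it is exactly here that the two distinct convergences --- of the vector fields $f_{n_k}\to f$ and of the curves $\psi_{n_k}\to\psi$ --- must be combined, and it is the only place where mere continuity of $f$, rather than Lipschitz continuity, is what is available.

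Finally, having $\partial_t\psi_{n_k}=f_{n_k}(\cdot,\psi_{n_k}(\cdot))$ converge uniformly to $t\mapsto f(t,\psi(t))$ while $\psi_{n_k}\to\psi$ uniformly, the standard theorem on uniform limits of $C^1$ functions gives $\psi\in C^1([0,a])$ with $\partial_t\psi(t)=f(t,\psi(t))$, so $\psi$ solves \eqref{a1}, and $\partial_t\psi_{n_k}\to\partial_t\psi$ uniformly, as claimed. Equivalently, one may pass to the limit under the integral in $\psi_{n_k}(t)=\psi_{n_k}(0)+\int_0^t f_{n_k}(s,\psi_{n_k}(s))\,ds$ and then differentiate. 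Uniqueness of $\psi$ need not hold, since $f$ is only assumed continuous, which is precisely why the statement allows a non-unique limit and only asserts convergence along a subsequence.
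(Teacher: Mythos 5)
Your proof is correct: the uniform bound on the $f_n$ over $[0,a]\times K$ gives equicontinuity, Arzel\`a--Ascoli yields the uniformly convergent subsequence, and the triangle-inequality splitting (using uniform continuity of $f$ on $[0,a]\times K$) correctly handles the only delicate step, passing to the limit in the derivatives with merely continuous $f$ --- which is exactly why only subsequential, possibly non-unique, convergence can be asserted. The paper itself states this lemma without proof, referring to a variant in \cite{ODEpaper}, and your compactness argument is precisely the standard one that reference is adapting, so there is nothing to add.
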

\section{A simple matrix inequality}\label{appb}
\begin{lemma}\label{lemma est}
For any complex valued $m\times n$ matrix $a$ and for any $b\in
\mathcal S(S_n^+)$ we have that
\[
\Real\tr(b\bar a^\top a)\geq 0
\]
\end{lemma}
\begin{proof}
Write $a=a_1+ia_2$, and $b=b_1+ib_2$. Then we have
\begin{align*}
\Real\tr(b \bar a^\top a)&=\Real\tr((b_1+ib_2)(a_1^\top -i a_2^\top)(a_1+ia_2))\\&=
\Real\tr((b_1+ib_2)(a_1^\top a_1+i a_1^\top a_2-i a_2^\top a_1+a_2^\top a_2))\\&=
\tr(b_1 (a_1^\top a_1))+\tr(b_1(a_2^\top a_2))+\tr(b_2 a_2^\top a_1)-\tr(b_2 a_1^\top a_2)\\&\geq
0+0+\tr(b_2 a_2^\top a_1)-\tr(b_2^\top a_1^\top a_2)=0.
\end{align*}
Here we have used that $a_1^\top a_1,\,b_1\in S_n^+$, $b_2=b_2^\top$ and the commutativity of the
matrix trace.
\end{proof}

\end{appendix}

\end{document}